\providecommand{\U}[1]{\protect\rule{.1in}{.1in}}
\newtheorem{theorem}{Theorem}
\newtheorem{corollary}[theorem]{Corollary}
\newtheorem{lemma}[theorem]{Lemma}
\newenvironment{proof}[1][Proof]{\noindent\textbf{#1.} }{\ \rule{0.5em}{0.5em}}
\begin{document}

\title{Tropical tensor product and beyond}
\author{Peter Butkovi\v{c} and Miroslav Fiedler}
\maketitle

\begin{abstract}
Our aim is to introduce the tropical tensor product and investigate its
properties. In particular we show its use for solving tropical matrix equations.

\end{abstract}

\section{Tropical prerequisites}

The aim of this paper is to introduce the tropical tensor product of matrices
and investigate its properties. In particular we show its use for solving
tropical matrix equations.

$\bigskip$The operations are in max-plus, that is for $a,b\in\overline
{\mathbb{R}}:=\mathbb{R}\cup\{\varepsilon=-\infty\}$ we define
\begin{align*}
a\oplus b  & =\max\left(  a,b\right)  ,\\
a\otimes b  & =a+b.
\end{align*}
Note that $\left(  \overline{\mathbb{R}},\oplus,\otimes\right)  $ is a
commutative idempotent semiring.

For matrices $A,B$ of compatible sizes we define%

\begin{align*}
A\oplus B  & =(a_{ij}\oplus b_{ij}),\\
A\otimes B  & =\left(  \sum_{k}^{\oplus}a_{ik}\otimes b_{kj}\right)  ,\\
\alpha\otimes A  & =\left(  \alpha\otimes a_{ij}\right)  .
\end{align*}
We denote $A\otimes A$ by $A^{2},$ etc... (including scalars).

An $n\times n$ matrix is called \textit{diagonal}, notation $diag(d_{1}%
,...,d_{n}),$ or just $diag(d),$ if its diagonal entries are $d_{1}%
,...,d_{n}\in\mathbb{R}$ and off-diagonal entries are $\varepsilon.$ The
matrix $diag(0,...,0)$ of an appropriate order will be called the
\textit{unit} \textit{matrix}\ and denoted by $I.$ Any matrix which can be
obtained from the unit (diagonal) matrix by permuting the rows and/or columns
will be called a \textit{permutation matrix }(\textit{generalized permutation
matrix}). Note that a matrix in the max-plus setting is invertible if and only
if it is a generalized permutation matrix \cite{PB book}.

We denote $N=\left\{  1,2,...,n\right\}  $ and by $P_{n}$ the set of all
permutation of $N.$ \textit{Tropical permanent} is an analogue of the
conventional permanent:%
\[
maper(A)\overset{df}{=}{\sum_{\pi\in P_{n}}}^{\oplus}{\prod_{i\in N}}%
^{\otimes}a_{i,\pi(i)}=\max_{\pi\in P_{n}}\sum_{i\in N}a_{i,\pi(i)}.
\]
Hence finding $maper(A)$ amounts to\ solving the classical assignment problem,
that is to finding a permutation $\pi\in P_{n}$ maximising $w(\pi,A)$ where%
\[
w(\pi,A)=\sum_{i\in N}a_{i,\pi(i)}.
\]

As a direct consequence of the Hungarian method for solving the assignment
problem \cite{Burkard AP} we have the following \cite{PB book}.

\begin{theorem}
\label{Th AP}Let $A\in\overline{\mathbb{R}}^{n\times n}$ and suppose that
$w(\pi,A)$ is finite for at least one $\pi\in P_{n}.$ Then diagonal matrices
$C,D$ such that
\begin{equation}
maper\left(  C\otimes A\otimes D\right)  =0\label{maper CAD}%
\end{equation}
and
\begin{equation}
C\otimes A\otimes D\leq0\label{CAD <=0}%
\end{equation}
exist and can be found in $O(n^{3})$ time. Also, the following holds for any
diagonal matrices $C$ and $D$ satisfying (\ref{maper CAD}):
\begin{align*}
maper(A)  & =\\
& =\left(  maper(C)\otimes maper(D)\right)  ^{-1}\\
& =\left(  {\prod_{i\in N}}^{\otimes}c_{ii}\otimes{\prod_{i\in N}}^{\otimes
}d_{ii}\right)  ^{-1}.
\end{align*}

\end{theorem}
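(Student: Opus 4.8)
The plan is to split the argument into two parts: first the \emph{formula} for $maper(A)$, which holds for any diagonal $C,D$ satisfying (\ref{maper CAD}) and is a direct algebraic computation; and second the \emph{existence} of such $C,D$ together with the $O(n^{3})$ bound, which I would derive from the Hungarian method (equivalently, from linear-programming duality for the assignment problem).

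For the formula, write $C=diag(c_{1},\dots,c_{n})$ and $D=diag(d_{1},\dots,d_{n})$, so that $(C\otimes A\otimes D)_{ij}=c_{i}\otimes a_{ij}\otimes d_{j}$ for all $i,j$. The first step is the observation that left/right multiplication by diagonal matrices shifts the weight of \emph{every} permutation by the same constant: for any $\pi\in P_{n}$,
\[
w(\pi,C\otimes A\otimes D)={\prod_{i\in N}}^{\otimes}\big(c_{i}\otimes a_{i,\pi(i)}\otimes d_{\pi(i)}\big)=w(\pi,A)\otimes{\prod_{i\in N}}^{\otimes}c_{i}\otimes{\prod_{i\in N}}^{\otimes}d_{i},
\]
where I use that $\pi$ is a bijection, so ${\prod_{i}}^{\otimes}d_{\pi(i)}={\prod_{i}}^{\otimes}d_{i}$. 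Since $C,D$ are diagonal, the identity is the only permutation contributing a finite term, whence ${\prod_{i}}^{\otimes}c_{i}=maper(C)$ and ${\prod_{i}}^{\otimes}d_{i}=maper(D)$. Taking the $\oplus$-sum (maximum) over $\pi$, and using that the shift is constant in $\pi$, yields
\[
maper(C\otimes A\otimes D)=maper(A)\otimes maper(C)\otimes maper(D).
\]
Imposing (\ref{maper CAD}), i.e. setting the left side to $0$, and $\otimes$-inverting gives exactly the claimed identity $maper(A)=\left(maper(C)\otimes maper(D)\right)^{-1}$.

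For existence and complexity I would view $maper(A)$ as the optimal value of the assignment LP $\max\sum_{ij}a_{ij}x_{ij}$ over the doubly stochastic polytope, whose dual is $\min\sum_{i}u_{i}+\sum_{j}v_{j}$ subject to $u_{i}+v_{j}\geq a_{ij}$. The finiteness hypothesis guarantees a feasible assignment of finite weight, hence a finite optimum and, by strong duality, an optimal dual solution $(u,v)$ with $\sum_{i}u_{i}+\sum_{j}v_{j}=maper(A)$. I would then set $c_{i}=u_{i}^{-1}$ and $d_{j}=v_{j}^{-1}$: dual feasibility $u_{i}+v_{j}\geq a_{ij}$ becomes $c_{i}\otimes a_{ij}\otimes d_{j}\leq 0$, which is (\ref{CAD <=0}), while complementary slackness forces an optimal assignment $\pi$ to satisfy $c_{i}\otimes a_{i,\pi(i)}\otimes d_{\pi(i)}=0$ for all $i$, so $maper(C\otimes A\otimes D)=0$, i.e. (\ref{maper CAD}). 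The Hungarian method \cite{Burkard AP} computes precisely such optimal dual potentials in $O(n^{3})$ time, supplying both the construction of $C,D$ and the complexity bound.

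The part I expect to require the most care is the interaction with the $\varepsilon=-\infty$ entries of $A$: I must check that the finiteness hypothesis is exactly what is needed to keep the dual feasible and the optimum finite (so that the potentials $u_{i},v_{j}$ are real and $c_{i},d_{j}\in\mathbb{R}$), and that complementary slackness can always be met by a \emph{finite-weight} permutation. Everything else is either the one-line shift identity above or a direct appeal to the known behaviour of the Hungarian method.
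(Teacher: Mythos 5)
Your proof is correct and takes exactly the route the paper intends: the paper does not prove this theorem itself but states it as a direct consequence of the Hungarian method (citing \cite{Burkard AP} and \cite{PB book}), and your two-part argument --- the constant-shift identity $maper(C\otimes A\otimes D)=maper(A)\otimes maper(C)\otimes maper(D)$ for diagonal $C,D$, plus extraction of $C,D$ from optimal dual potentials with complementary slackness --- is precisely that consequence spelled out. The one delicate point, the $\varepsilon$ entries, you flag and resolve correctly: dual constraints for $a_{ij}=\varepsilon$ are vacuous, and the finiteness hypothesis is exactly what makes the optimum (hence the potentials $u_{i},v_{j}$, hence the diagonal entries of $C,D$) finite.
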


\textit{The maximum cycle mean of }$A\in\overline{\mathbb{R}}^{n\times n}$ is%
\[
\lambda(A)=\max\left\{  \frac{a_{i_{1}i_{2}}+a_{i_{2}i_{3}}+...+a_{i_{k}i_{1}%
}}{k};k,i_{1},...,i_{k}\in N\right\}
\]
where

\begin{theorem}
\label{Th Max eproblem}For every $A\in\overline{\mathbb{R}}^{n\times n}$ there
is an $x\in\overline{\mathbb{R}}^{n},x\neq\varepsilon$ (eigenvector) such that
$A\otimes x=\lambda\left(  A\right)  \otimes x.$ If $x\in\mathbb{R}^{n}$ and
$A\otimes x=\lambda\otimes x$ then $\lambda=\lambda\left(  A\right)  .$ If $A$
is irreducible then all eigenvectors are finite and hence $\lambda\left(
A\right)  $ is the unique eigenvalue .
\end{theorem}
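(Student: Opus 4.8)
The plan is to prove the three assertions in turn, reducing the first two to the normalized situation $\lambda(A)=0$ and invoking irreducibility only for the third.

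\emph{Existence.} When $\lambda(A)$ is finite I would first normalize: put $A_{0}=(-\lambda(A))\otimes A$, i.e. subtract $\lambda(A)$ from every entry. This does not change the eigenvectors and shifts every cycle mean down by $\lambda(A)$, so $\lambda(A_{0})=0$ and it is enough to find $x\neq\varepsilon$ with $A_{0}\otimes x=x$. Because $\lambda(A_{0})=0$, no cycle has positive weight, so the Kleene star $A_{0}^{\ast}=I\oplus A_{0}\oplus A_{0}^{2}\oplus\cdots$ stabilizes after finitely many terms and $(A_{0}^{\ast})_{ij}$ equals the maximum weight of a walk from $i$ to $j$. I would then pick a node $r$ lying on a cycle of mean $0$ (such a \emph{critical} node exists since the maximum cycle mean $0$ is attained) and take $x$ to be the $r$-th column of $A_{0}^{\ast}$. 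Verifying $A_{0}\otimes x=x$ amounts to the statement that the heaviest walk from $i$ to $r$ using at least one arc has the same weight as the heaviest walk from $i$ to $r$: off the diagonal this is automatic, and at $i=r$ it holds because the heaviest closed walk through $r$ has weight $0=(A_{0}^{\ast})_{rr}$. Since $x_{r}=0$ is finite, $x\neq\varepsilon$, and undoing the normalization yields an eigenvector of $A$ for $\lambda(A)$. The degenerate case $\lambda(A)=\varepsilon$ (no finite cycle) I would handle separately: here $\lambda(A)\otimes x=\varepsilon$ for every $x$, so I need $A\otimes x=\varepsilon$ for some $x\neq\varepsilon$, which is provided by the indicator of a node whose column is entirely $\varepsilon$ (the digraph of finite entries is then acyclic, hence has such a source).

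\emph{The finite eigenvector determines $\lambda(A)$.} Suppose $x\in\mathbb{R}^{n}$ and $A\otimes x=\lambda\otimes x$. Reading off row $i$ gives $a_{ij}\leq\lambda+x_{i}-x_{j}$ for every arc $i\to j$, with equality attained for at least one $j$. Summing the inequality around an arbitrary cycle makes the $x$-contributions telescope, leaving (cycle mean)$\,\leq\lambda$; hence $\lambda(A)\leq\lambda$. For the reverse inequality I would, starting from any node, repeatedly move along an arc at which equality holds; as $N$ is finite this trajectory must close up into a cycle all of whose arcs are tight, and that cycle has mean exactly $\lambda$, so $\lambda(A)\geq\lambda$. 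Therefore $\lambda=\lambda(A)$. Note that finiteness of $x$ is precisely what makes the differences $x_{i}-x_{j}$ legitimate.

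\emph{Irreducible case.} Assuming $A$ irreducible, i.e. its digraph strongly connected, I would first argue that every eigenvalue $\mu$ is finite and then that every eigenvector is finite. Since $x\neq\varepsilon$, some $x_{i}$ is finite; strong connectivity supplies an arc $j\to i$, so $(A\otimes x)_{j}\geq a_{ji}+x_{i}>\varepsilon$, which excludes $\mu=\varepsilon$. Iterating the eigen-equation, $A^{m}\otimes x=\mu^{m}\otimes x$, and choosing for each $j$ a walk of length $m$ from $j$ to $i$ gives $m\mu+x_{j}\geq(A^{m})_{ji}+x_{i}>\varepsilon$, whence $x_{j}>\varepsilon$; thus $x$ is finite. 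With all eigenvectors finite, the previous paragraph forces every eigenvalue to equal $\lambda(A)$, which is the asserted uniqueness. I expect the main obstacle to be the existence step, specifically the two facts underpinning the Kleene-star construction: that $A_{0}^{\ast}$ is well defined (walk weights do not diverge once $\lambda(A_{0})=0$) and that the critical column is genuinely fixed by $A_{0}$; by contrast the remaining assertions reduce to routine telescoping and strong-connectivity arguments once the normalization is in place.
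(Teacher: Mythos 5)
The paper never proves this theorem: it is stated as a prerequisite and imported from the max-plus literature (the citation is to Butkovi\v{c}'s monograph), so there is no in-paper argument to compare yours against. Judged on its own, your proof is correct, and it is the standard textbook argument: normalize to $A_{0}=(-\lambda(A))\otimes A$ so that $\lambda(A_{0})=0$, observe that absence of positive cycles makes $A_{0}^{\ast}=I\oplus A_{0}\oplus\cdots\oplus A_{0}^{n-1}$ stabilize and equal maximal walk weights, and take the column of $A_{0}^{\ast}$ indexed by a critical node; your verification that this column is fixed by $A_{0}$ (off-diagonal entries automatic, diagonal entry $0$ because the critical cycle gives a closed walk of weight $0$ and no closed walk is positive) is exactly right, as is the source-node construction in the degenerate case $\lambda(A)=\varepsilon$. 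The telescoping argument and the tight-arc trajectory give the second assertion, and strong connectivity propagates finiteness for the third. Two small points worth tightening. First, in the second assertion you implicitly assume $\lambda$ finite when extracting a tight cycle; if $\lambda=\varepsilon$ and $x$ is finite, then every entry of $A$ is $\varepsilon$, so $\lambda(A)=\varepsilon=\lambda$ anyway --- one sentence closes this case. Second, in the irreducible part your phrase ``a walk of length $m$ from $j$ to $i$'' needs $m$ to depend on $j$; this is harmless because $A^{m}\otimes x=\mu^{m}\otimes x$ holds for every $m$, but say $m=m_{j}$ explicitly. Neither point is a genuine gap.
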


We also denote
\[
a\oplus^{\prime}b=\min(a,b),
\]%
\[
a\otimes^{\prime}b=a+b\text{ \ if }\{a,b\}\neq\{-\infty,+\infty\}
\]
and%
\[
\left(  -\infty\right)  \otimes^{\prime}\left(  +\infty\right)  =+\infty
=\left(  +\infty\right)  \otimes^{\prime}\left(  -\infty\right)  .
\]
The \textit{conjugate} of $A$ is $A^{\#}=-A^{T}.$ It is known that $A\otimes
x\leq b\Longleftrightarrow x\leq A^{\#}\otimes^{\prime}b\overset{df}{=}%
\overline{x}$ \cite{PB book} and consequently, a solution to $A\otimes x=b$
exists if and only if $\overline{x}$ is a solution.

\section{\bigskip Tensor product}

Let $A=\left(  a_{ij}\right)  \in\mathbb{R}^{m\times n},B=\left(
b_{ij}\right)  \in\mathbb{R}^{r\times s}.$ The\textit{\ tensor product} of $A$
and $B$ is the following $mr\times ns$ matrix:%
\[
A\boxtimes B=\left(
\begin{array}
[c]{cccc}%
A\otimes b_{11} & A\otimes b_{12} & \cdots & A\otimes b_{1s}\\
A\otimes b_{21} & A\otimes b_{22} & \cdots & A\otimes b_{2s}\\
\cdots & \cdots & \cdots & \cdots\\
A\otimes b_{r1} & A\otimes b_{r2} & \cdots & A\otimes b_{rs}%
\end{array}
\right)  .
\]

Note that $\left(  A\boxtimes B\right)  ^{T}=A^{T}\boxtimes B^{T}$ and the
tensor product of two diagonal matrices of order $n$ is a diagonal matrix of
order $n^{2}$. In particular the tensor product of two unit matrices is a unit matrix.

Matrices $A$ and $B$ are called \textit{product} \textit{compatible} if the
product $A\otimes B$ is well defined, that is the number of columns of $A$ is
equal to the number of rows of $B.$ The proof of the following theorem follows
the lines of the proof of the analogous statement in \cite{MF book}.

\begin{theorem}
\label{Th ABCD}If the matrices $A$ and $C$ are \textit{product }compatible and
also $B$ and $D$ are \textit{product }compatible then $A\boxtimes B$ and
$C\boxtimes D$ are \textit{product }compatible and%
\begin{equation}
\left(  A\boxtimes B\right)  \otimes\left(  C\boxtimes D\right)  =\left(
A\otimes C\right)  \boxtimes\left(  B\otimes D\right)  .\label{ABCD is ACBD}%
\end{equation}

\end{theorem}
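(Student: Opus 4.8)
The plan is to prove the identity (\ref{ABCD is ACBD}) entrywise, indexing the rows and columns of every tensor product by \emph{pairs}. First I would pin down the dimensions. Product compatibility of $A,C$ means $A\in\mathbb{R}^{m\times n}$ and $C\in\mathbb{R}^{n\times t}$ for some $t$, and likewise $B\in\mathbb{R}^{r\times s}$, $D\in\mathbb{R}^{s\times u}$. Then $A\boxtimes B$ is $mr\times ns$ and $C\boxtimes D$ is $ns\times tu$, so the number of columns of the former equals the number of rows of the latter; this already establishes the claimed product compatibility and shows both sides of (\ref{ABCD is ACBD}) are $mr\times tu$ matrices. Throughout I will write the row index of a tensor product as a pair $(p,i)$, where $p$ is the block index coming from the second factor and $i$ the inner index coming from the first factor, and similarly for columns.

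With this convention the defining formula collapses to the single scalar rule $(A\boxtimes B)_{(p,i),(q,k)}=a_{ik}\otimes b_{pq}$, and analogously for $C\boxtimes D$ and for the right-hand side. Reducing the statement to this one identity is the key preliminary step; after that the theorem is a direct computation. Next I would expand the $\bigl((p,i),(w,l)\bigr)$ entry of the left-hand side as a max-plus sum over the shared pair index $(q,k)$, namely ${\sum_{q,k}}^{\oplus}(a_{ik}\otimes b_{pq})\otimes(c_{kl}\otimes d_{qw})$, and then, using commutativity and associativity of $\otimes$ (which is ordinary addition), regroup each summand as $(a_{ik}\otimes c_{kl})\otimes(b_{pq}\otimes d_{qw})$.

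The heart of the argument is the factorisation of this double maximum. Since the first grouped factor depends only on $k$ and the second only on $q$, and since the summation index $(q,k)$ ranges over a Cartesian product, the identity $\max_{q,k}[f(k)+g(q)]=\max_{k}f(k)+\max_{q}g(q)$ — equivalently, distributivity of $\otimes$ over $\oplus$ together with the separation of independent summation indices in the commutative semiring $\bigl(\overline{\mathbb{R}},\oplus,\otimes\bigr)$ — lets me split the sum as $\bigl({\sum_{k}}^{\oplus}a_{ik}\otimes c_{kl}\bigr)\otimes\bigl({\sum_{q}}^{\oplus}b_{pq}\otimes d_{qw}\bigr)$. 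I would then recognise these two factors as $(A\otimes C)_{il}$ and $(B\otimes D)_{pw}$, whose $\otimes$-product is exactly the $\bigl((p,i),(w,l)\bigr)$ entry of $(A\otimes C)\boxtimes(B\otimes D)$, which finishes the proof.

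The main obstacle I anticipate is not any deep algebra but bookkeeping: keeping the block index and the inner index of each tensor product cleanly separated, and verifying that the single matrix-product summation in $(A\boxtimes B)\otimes(C\boxtimes D)$ really does run over the composite pair $(q,k)$ over a product index set, so that the factorisation of the maximum legitimately applies. Once the pair-indexing is fixed correctly, the only structural facts invoked are the commutativity of $\otimes$ and its distributivity over $\oplus$, exactly as in the classical argument referenced in \cite{MF book}.
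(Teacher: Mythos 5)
Your proof is correct, and your pair-indexing convention faithfully matches the paper's definition (block index from the second factor, inner index from the first), so the reduction to the scalar rule $(A\boxtimes B)_{(p,i),(q,k)}=a_{ik}\otimes b_{pq}$ and the subsequent computation are sound. The route differs from the paper's in granularity rather than in substance: the paper never descends to individual entries, but multiplies the two tensor products as block matrices, invoking blockwise tropical matrix multiplication, so that the $(p,w)$ block of the product is ${\sum_{q}}^{\oplus}\left(  A\otimes b_{pq}\right)  \otimes\left(  C\otimes d_{qw}\right)  =A\otimes C\otimes{\sum_{q}}^{\oplus}b_{pq}\otimes d_{qw}$, which is visibly the $(p,w)$ block of $\left(  A\otimes C\right)  \boxtimes\left(  B\otimes D\right)$; the factorisation you isolate as $\max_{q,k}\left[  f(k)+g(q)\right]  =\max_{k}f(k)+\max_{q}g(q)$ is absorbed there into pulling the scalars $b_{pq},d_{qw}$ out of the block products and summing over the single block index $q$. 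Your entrywise version costs more index bookkeeping but buys two things: it makes completely explicit that the only facts used are commutativity and associativity of $\otimes$, distributivity of $\otimes$ over $\oplus$, and separation of a sum over a Cartesian product index set (so the argument transfers verbatim to any commutative semiring, and handles $\varepsilon=-\infty$ entries transparently), and it effectively proves, rather than assumes, the validity of blockwise multiplication that the paper cites as known. If you polish your write-up, state the separation identity as a one-line observation -- it follows by applying distributivity twice, since ${\sum_{q,k}}^{\oplus}f(k)\otimes g(q)=\left(  {\sum_{k}}^{\oplus}f(k)\right)  \otimes\left(  {\sum_{q}}^{\oplus}g(q)\right)$ -- and the rest is exactly the bookkeeping you describe.
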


\begin{proof}
If $B$ is $p\times q$ and $D$ is $q\times r$ then the LHS of
(\ref{ABCD is ACBD}) is%
\[
\left(
\begin{array}
[c]{cccc}%
A\otimes b_{11} & A\otimes b_{12} & \cdots & A\otimes b_{1q}\\
A\otimes b_{21} & A\otimes b_{22} & \cdots & A\otimes b_{2q}\\
\cdots & \cdots & \cdots & \cdots\\
A\otimes b_{p1} & A\otimes b_{p2} & \cdots & A\otimes b_{pq}%
\end{array}
\right)  \otimes\left(
\begin{array}
[c]{cccc}%
C\otimes d_{11} & C\otimes d_{12} & \cdots & C\otimes d_{1r}\\
C\otimes d_{21} & C\otimes d_{22} & \cdots & C\otimes d_{2r}\\
\cdots & \cdots & \cdots & \cdots\\
C\otimes d_{q1} & C\otimes d_{q2} & \cdots & C\otimes d_{qr}%
\end{array}
\right)  .
\]
Using blockwise tropical matrix multiplication we get that this is the same as%
\[
\left(
\begin{array}
[c]{ccc}%
A\otimes C\otimes\left(  b_{11}\otimes d_{11}\oplus...\oplus b_{1q}\otimes
d_{q1}\right)  & \cdots & A\otimes C\otimes\left(  b_{11}\otimes d_{1r}%
\oplus...\oplus b_{1q}\otimes d_{qr}\right) \\
\cdots & \cdots & \cdots\\
A\otimes C\otimes\left(  b_{p1}\otimes d_{11}\oplus...\oplus b_{pq}\otimes
d_{q1}\right)  & \cdots & A\otimes C\otimes\left(  b_{p1}\otimes d_{1r}%
\oplus...\oplus b_{pq}\otimes d_{qr}\right)
\end{array}
\right)  ,
\]
which is $\left(  A\otimes C\right)  \boxtimes\left(  B\otimes D\right)  .$
\end{proof}

\begin{theorem}
\bigskip If $A$ and $B$ are invertible then $A\boxtimes B$ is invertible too
and
\[
\left(  A\boxtimes B\right)  ^{-1}=A^{-1}\boxtimes B^{-1}.
\]

\end{theorem}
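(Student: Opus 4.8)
The plan is to derive the statement as an immediate consequence of Theorem~\ref{Th ABCD}, by choosing the two factors to be mutual inverses. Since $A$ and $B$ are invertible, they are square, say of orders $n$ and $m$, and they are generalized permutation matrices with two-sided inverses $A^{-1}$ and $B^{-1}$ satisfying $A\otimes A^{-1}=A^{-1}\otimes A=I$ and $B\otimes B^{-1}=B^{-1}\otimes B=I$, where the two unit matrices have orders $n$ and $m$ respectively.

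First I would observe that $A$ and $A^{-1}$ are product compatible and that $B$ and $B^{-1}$ are product compatible, so Theorem~\ref{Th ABCD} applies to the pairs $(A,B)$ and $(A^{-1},B^{-1})$. Substituting $C=A^{-1}$ and $D=B^{-1}$ in (\ref{ABCD is ACBD}) gives
\[
(A\boxtimes B)\otimes(A^{-1}\boxtimes B^{-1})=(A\otimes A^{-1})\boxtimes(B\otimes B^{-1})=I\boxtimes I.
\]
By the remark preceding that theorem, the tensor product of two unit matrices is again a unit matrix, so the right-hand side is the unit matrix of order $nm$. Running the same computation with the roles of the factors reversed yields $(A^{-1}\boxtimes B^{-1})\otimes(A\boxtimes B)=I\boxtimes I=I$, so $A^{-1}\boxtimes B^{-1}$ is a genuine two-sided inverse of $A\boxtimes B$, which establishes both the invertibility and the formula.

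The only point requiring a little care is that the tensor product was defined for matrices over $\mathbb{R}$, whereas invertible max-plus matrices are generalized permutation matrices and hence carry off-diagonal entries equal to $\varepsilon$. I would therefore first note that the definition of $\boxtimes$ and the identity in Theorem~\ref{Th ABCD} extend verbatim to matrices over $\overline{\mathbb{R}}$, since the blockwise multiplication argument in that proof uses only the semiring axioms of $(\overline{\mathbb{R}},\oplus,\otimes)$ and never the finiteness of the entries. With that observation in place the chain of equalities above is the entire argument, and there is no substantial obstacle beyond confirming that $I\boxtimes I=I$ in the appropriate order.
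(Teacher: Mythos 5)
Your proof is correct and takes essentially the same route as the paper: apply Theorem \ref{Th ABCD} with $C=A^{-1}$ and $D=B^{-1}$, then use the fact that $I\boxtimes I=I$. You are in fact a bit more careful than the paper's one-line proof, which only writes down one side of the inverse identity; your checks of two-sidedness and of the extension of $\boxtimes$ to matrices with $\varepsilon$ entries are refinements of the same argument rather than a different approach.
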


\begin{proof}
By Theorem \ref{Th ABCD} we have%
\[
\left(  A\boxtimes B\right)  ^{-1}\otimes\left(  A^{-1}\boxtimes
B^{-1}\right)  =\left(  A\otimes A^{-1}\right)  \boxtimes\left(  B\otimes
B^{-1}\right)  =I\boxtimes I=I.
\]

\end{proof}

\begin{theorem}
\label{Th Eproblem of TTP}If $A\otimes x=\lambda\otimes x$ and $B\otimes
y=\mu\otimes y$ then

\begin{enumerate}
\item[(a)] $\left(  A\boxtimes B\right)  \otimes\left(  x\boxtimes y\right)
=\lambda\otimes\mu\otimes\left(  x\boxtimes y\right)  $ and

\item[(b)] $\alpha\otimes\lambda\oplus\beta\otimes\mu$ is an eigenvalue of
$\alpha\otimes\left(  A\boxtimes I\right)  \oplus\beta\otimes\left(
I\boxtimes B\right)  $ for any $\alpha,\beta\in\mathbb{R}.$
\end{enumerate}
\end{theorem}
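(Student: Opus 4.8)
The plan is to prove part (a) by a direct computation that reduces everything to Theorem~\ref{Th ABCD}, and then to prove part (b) by finding an explicit eigenvector built out of $x$ and $y$.

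For part (a), first I would observe that a scalar $\lambda$ acting on a vector can be written as multiplication by a diagonal matrix, or more simply that the tensor product of vectors behaves well under scaling: since $\lambda$ and $\mu$ are scalars, $(\lambda\otimes x)\boxtimes(\mu\otimes y)=\lambda\otimes\mu\otimes(x\boxtimes y)$, which follows immediately from the definition of $\boxtimes$ (every entry picks up the same additive constant $\lambda+\mu$). The main step is then to apply Theorem~\ref{Th ABCD} with the four matrices $A,B,x,y$, regarding the vectors $x$ and $y$ as matrices with a single column. Because $A$ and $x$ are product compatible and $B$ and $y$ are product compatible, Theorem~\ref{Th ABCD} gives
\[
(A\boxtimes B)\otimes(x\boxtimes y)=(A\otimes x)\boxtimes(B\otimes y).
\]
Substituting the eigenvalue equations $A\otimes x=\lambda\otimes x$ and $B\otimes y=\mu\otimes y$ into the right-hand side and using the scaling identity above yields $\lambda\otimes\mu\otimes(x\boxtimes y)$, which is exactly the claim. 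The only thing to check carefully is that $x\boxtimes y\neq\varepsilon$, so that it is a genuine eigenvector; this holds because $x$ and $y$ are eigenvectors and hence not identically $\varepsilon$, and $\boxtimes$ of two nonzero vectors has at least one finite entry.

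For part (b), the plan is to exhibit $x\boxtimes y$ as an eigenvector of the combined matrix. I would compute the action of $\alpha\otimes(A\boxtimes I)\oplus\beta\otimes(I\boxtimes B)$ on $x\boxtimes y$ term by term. Applying part (a) to each summand separately — using that $I\otimes x=0\otimes x=x$ and $I\otimes y=y$, so $x$ and $y$ are eigenvectors of $I$ with eigenvalue $0$ — gives
\[
(A\boxtimes I)\otimes(x\boxtimes y)=\lambda\otimes(x\boxtimes y),\qquad
(I\boxtimes B)\otimes(x\boxtimes y)=\mu\otimes(x\boxtimes y).
\]
Multiplying by the scalars $\alpha,\beta$ and taking the tropical sum $\oplus$ (which distributes over the matrix-vector action because $\oplus$ is the additive operation of the semiring) then produces
\[
\bigl(\alpha\otimes(A\boxtimes I)\oplus\beta\otimes(I\boxtimes B)\bigr)\otimes(x\boxtimes y)
=(\alpha\otimes\lambda\oplus\beta\otimes\mu)\otimes(x\boxtimes y),
\]
which shows that $\alpha\otimes\lambda\oplus\beta\otimes\mu$ is an eigenvalue with eigenvector $x\boxtimes y$.

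The step I expect to require the most care is the distributivity used in part (b): one must verify that for a sum of two matrices $M\oplus N$ the action $(M\oplus N)\otimes z$ equals $(M\otimes z)\oplus(N\otimes z)$, and that scalar multiples pull through the $\otimes$ correctly. This is just right-distributivity of the matrix product over $\oplus$ together with associativity of $\otimes$ with scalars, both of which hold in the commutative idempotent semiring $(\overline{\mathbb{R}},\oplus,\otimes)$; still, it is the one place where the argument is not a pure consequence of Theorem~\ref{Th ABCD} and deserves an explicit line.
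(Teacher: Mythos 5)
Your proof is correct and takes essentially the same route as the paper's: both parts reduce to the mixed-product rule of Theorem~\ref{Th ABCD}, with part (b) obtained by distributing the action of the tropical sum $\alpha\otimes(A\boxtimes I)\oplus\beta\otimes(I\boxtimes B)$ over $x\boxtimes y$ and treating $x$ and $y$ as eigenvectors of $I$ with eigenvalue $0$. Your extra remarks (that $x\boxtimes y\neq\varepsilon$, and the explicit right-distributivity of matrix-vector multiplication over $\oplus$) are sound details the paper leaves implicit.
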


\begin{proof}
(a) By Theorem \ref{Th ABCD} we have%
\begin{align*}
\left(  A\boxtimes B\right)  \otimes\left(  x\boxtimes y\right)   & =\\
& =\left(  A\otimes x\right)  \boxtimes\left(  B\otimes y\right) \\
& =\left(  \lambda\otimes x\right)  \boxtimes\left(  \mu\otimes y\right) \\
& =\lambda\otimes\mu\otimes\left(  x\boxtimes y\right)  .
\end{align*}

(b) By Theorem \ref{Th ABCD} we have%
\begin{align*}
\left(  \alpha\otimes\left(  A\boxtimes I\right)  \oplus\beta\otimes\left(
I\boxtimes B\right)  \right)  \otimes\left(  x\boxtimes y\right)   & =\\
& =\alpha\otimes\left(  A\boxtimes I\right)  \otimes\left(  x\boxtimes
y\right)  \oplus\beta\otimes\left(  I\boxtimes B\right)  \otimes\left(
x\boxtimes y\right) \\
& =\alpha\otimes\left(  A\otimes x\right)  \boxtimes\left(  I\otimes y\right)
\oplus\beta\otimes\left(  I\otimes x\right)  \boxtimes\left(  B\otimes
y\right) \\
& =\alpha\otimes\left(  \lambda\otimes x\right)  \boxtimes y\oplus\beta\otimes
x\boxtimes\left(  \mu\otimes y\right) \\
& =\alpha\otimes\lambda\otimes\left(  x\boxtimes y\right)  \oplus\beta
\otimes\mu\otimes\left(  x\boxtimes y\right)  ,
\end{align*}
from which the statement follows.
\end{proof}

\begin{corollary}
If $A$ and $B$ have finite eigenvectors, in particular if they are
irreducible, then $\lambda\left(  A\boxtimes B\right)  =\lambda\left(
A)\otimes\lambda(B\right)  .$
\end{corollary}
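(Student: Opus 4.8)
The plan is to exhibit an explicit finite eigenvector of $A \boxtimes B$ whose eigenvalue is forced, by the uniqueness clause of Theorem \ref{Th Max eproblem}, to coincide with $\lambda(A \boxtimes B)$. So the whole argument reduces to combining the two assertions of Theorem \ref{Th Max eproblem} with part (a) of Theorem \ref{Th Eproblem of TTP}.

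First I would use the hypothesis. Since $A$ and $B$ possess finite eigenvectors (which is automatic when they are irreducible), I pick finite $x$ and $y$ with $A \otimes x = \lambda \otimes x$ and $B \otimes y = \mu \otimes y$ for some scalars $\lambda,\mu$. By the second assertion of Theorem \ref{Th Max eproblem}, finiteness of $x$ and of $y$ forces $\lambda = \lambda(A)$ and $\mu = \lambda(B)$. Next I apply Theorem \ref{Th Eproblem of TTP}(a) to obtain
\[
(A \boxtimes B) \otimes (x \boxtimes y) = \lambda(A) \otimes \lambda(B) \otimes (x \boxtimes y),
\]
so that $x \boxtimes y$ is an eigenvector of $A \boxtimes B$ with eigenvalue $\lambda(A) \otimes \lambda(B)$.

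The key step is the observation that $x \boxtimes y$ is finite: each of its entries has the form $x_i \otimes y_j = x_i + y_j$, a sum of two finite reals, hence finite and in particular different from $\varepsilon$. This finiteness is exactly what allows me to invoke the second assertion of Theorem \ref{Th Max eproblem} a second time, now for the matrix $A \boxtimes B$ itself: a finite eigenvector pins the eigenvalue to the maximum cycle mean, whence $\lambda(A \boxtimes B) = \lambda(A) \otimes \lambda(B)$.

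I do not anticipate a serious obstacle; the only point that genuinely needs care is this finiteness check, because the uniqueness of the eigenvalue in Theorem \ref{Th Max eproblem} is guaranteed only once a finite eigenvector is in hand — a reducible matrix may well carry eigenvalues other than $\lambda$ along non-finite eigenvectors. The hypothesis that $A$ and $B$ have finite eigenvectors is used precisely to manufacture such a finite eigenvector $x \boxtimes y$ for the tensor product, and thereby to legitimately conclude the claimed equality rather than a mere inequality.
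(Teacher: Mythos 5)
Your proposal is correct and follows essentially the same route as the paper's own proof: take finite eigenvectors $x$ of $A$ and $y$ of $B$, apply Theorem \ref{Th Eproblem of TTP}(a) to get $x\boxtimes y$ as an eigenvector of $A\boxtimes B$ with eigenvalue $\lambda(A)\otimes\lambda(B)$, note that $x\boxtimes y$ is finite, and invoke the uniqueness clause of Theorem \ref{Th Max eproblem} to conclude. Your explicit verification that the entries $x_i\otimes y_j=x_i+y_j$ are finite is a point the paper merely asserts, but it is the same argument.
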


\begin{proof}
By Theorem \ref{Th Max eproblem} $\lambda\left(  A\right)  $ and
$\lambda\left(  B\right)  $ is the eigenvalue of $A$ and $B$ respectively with
associated finite eigenvectors, say $x$ and $y.$ By Theorem
\ref{Th Eproblem of TTP} $x\boxtimes y$ is a finite eigenvector of $A\boxtimes
B$ with the associated eigenvalue $\lambda\left(  A)\otimes\lambda(B\right)
.$ But the only eigenvalue of $A\boxtimes B$ associated with a finite
eigenvector is $\lambda\left(  A\boxtimes B\right)  $ (Theorem
\ref{Th Max eproblem}).
\end{proof}

If $A$ is a matrix then $vec\left(  A\right)  $ will stand for the vector
whose components are formed by the first column of $A$ followed by the second
column and so on.

\begin{theorem}
Matrix equation%
\begin{equation}
A_{1}\otimes X\otimes B_{1}\oplus A_{2}\otimes X\otimes B_{2}\oplus...\oplus
A_{r}\otimes X\otimes B_{r}=C,\label{Eq Matrix eq}%
\end{equation}
\bigskip where $A_{i},B_{i}$ and $C$ are of compatible sizes, is equivalent to
the vector-matrix system%
\[
\left(  A_{1}\boxtimes B_{1}^{T}\oplus A_{2}\boxtimes B_{2}^{T}\oplus...\oplus
A_{r}\boxtimes B_{r}^{T}\right)  \otimes vec\left(  X\right)  =vec\left(
C\right)  .
\]

\end{theorem}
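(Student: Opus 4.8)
The plan is to reduce the matrix equation to the vectorized form by establishing the fundamental identity that vectorizing a triple product $A\otimes X\otimes B$ corresponds to applying the tensor product $A\boxtimes B^{T}$ to $vec(X)$. That is, the key lemma I would prove first is
\begin{equation*}
vec\left(A\otimes X\otimes B\right)=\left(A\boxtimes B^{T}\right)\otimes vec(X).
\end{equation*}
Once this single-term identity is in hand, the full statement follows by applying $vec$ to both sides of (\ref{Eq Matrix eq}) and using that $vec$ is additive with respect to $\oplus$ (since $vec$ merely reindexes entries and $\oplus$ acts entrywise), so that $vec$ distributes over the $\oplus$-sum of the $r$ terms and the tensor products combine into the single matrix on the left-hand side of the claimed system.

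First I would fix the conventions: suppose $A$ is $m\times n$, $X$ is $n\times p$, $B$ is $p\times q$, so that $A\otimes X\otimes B$ is $m\times q$ and $vec(A\otimes X\otimes B)$ is a vector of length $mq$, while $vec(X)$ has length $np$. I would verify that $A\boxtimes B^{T}$, with $A$ of size $m\times n$ and $B^{T}$ of size $q\times p$, has size $mq\times np$, so the dimensions of the asserted product match. Then I would carefully index the entries: writing out the $(i,\ell)$ entry of $A\otimes X\otimes B$ as ${\sum_{j,k}}^{\oplus}a_{ij}\otimes x_{jk}\otimes b_{k\ell}$, I would identify its position in $vec(A\otimes X\otimes B)$ (row $\ell$ of column-stacking contributes an offset) and match it against the corresponding row of $(A\boxtimes B^{T})\otimes vec(X)$, where the block structure of $A\boxtimes B^{T}$ means the $(\ell,k)$ block equals $A\otimes (b^{T})_{\ell k}=A\otimes b_{k\ell}$.

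The main obstacle is bookkeeping: getting the index arithmetic of the column-stacking operator $vec$ to line up exactly with the block indexing of $\boxtimes$, so that the scalar $a_{ij}\otimes b_{k\ell}$ multiplying $x_{jk}$ appears in precisely the right entry on both sides. This is purely an indexing verification with no genuine difficulty in the max-plus arithmetic, since the tropical operations are entrywise and the distributive law over $\oplus$ holds in the semiring; the single-term lemma can in fact be read off as a special case of Theorem \ref{Th ABCD} by taking the product $(A\boxtimes B^{T})\otimes(X\boxtimes \mathbf{1})$ with a suitable one-column interpretation, but I would present the direct index computation as the cleaner route. Having matched both sides entrywise on a single term, the additive ($\oplus$-linear) extension to the full $r$-term sum is immediate and completes the proof.
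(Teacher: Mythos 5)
Your proposal is correct and takes essentially the same approach as the paper: reduce to the single-term case $r=1$, establish the key identity $vec\left(A\otimes X\otimes B\right)=\left(A\boxtimes B^{T}\right)\otimes vec\left(X\right)$, and extend to the $\oplus$-sum of $r$ terms by the entrywise additivity of $vec$ and distributivity of $\otimes$ over $\oplus$. The only difference is presentational: the paper verifies the key identity blockwise by writing $X$ in terms of its columns $X_{1},\ldots,X_{n}$, whereas you verify it entrywise; your side remark about deriving it from Theorem \ref{Th ABCD} is vaguer than it should be, but you do not rely on it.
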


\begin{proof}
It is sufficient to prove the statement for $r=1.$ Let $X_{1},X_{2},...$ be
the columns of $X.$ Let%
\[
B=\left(
\begin{array}
[c]{ccc}%
b_{11} & ... & b_{1p}\\
... & ... & ...\\
b_{n1} & ... & b_{np}%
\end{array}
\right)  .
\]
Then
\begin{align*}
vec\left(  A\otimes X\otimes B\right)   & =\\
& =vec\left(  \left(  A\otimes X_{1},...,A\otimes X_{n}\right)  \otimes
B\right) \\
& =\left(
\begin{array}
[c]{c}%
A\otimes X_{1}\otimes b_{11}\oplus A\otimes X_{2}\otimes b_{21}\oplus...\oplus
A\otimes X_{n}\otimes b_{n1}\\
A\otimes X_{1}\otimes b_{12}\oplus A\otimes X_{2}\otimes b_{22}\oplus...\oplus
A\otimes X_{n}\otimes b_{n2}\\
...\\
A\otimes X_{1}\otimes b_{1p}\oplus A\otimes X_{2}\otimes b_{2p}\oplus...\oplus
A\otimes X_{n}\otimes b_{np}%
\end{array}
\right) \\
& =\left(
\begin{array}
[c]{cccc}%
A\otimes b_{11} & A\otimes b_{21} & \cdots & A\otimes b_{n1}\\
A\otimes b_{12} & A\otimes b_{22} & \cdots & A\otimes b_{n2}\\
\cdots & \cdots & \cdots & \cdots\\
A\otimes b_{1p} & A\otimes b_{2p} & \cdots & A\otimes b_{np}%
\end{array}
\right)  \otimes\left(
\begin{array}
[c]{c}%
X_{1}\\
X_{2}\\
...\\
X_{n}%
\end{array}
\right) \\
& =\left(  A\boxtimes B^{T}\right)  \otimes vec\left(  X\right)
\end{align*}

\end{proof}

\begin{corollary}
\bigskip Matrix equation (\ref{Eq Matrix eq}) has a solution if and only if
\[
D\otimes\left(  D^{\#}\otimes^{\prime}vec\left(  C\right)  \right)
=vec\left(  C\right)  ,
\]
where
\[
D=A_{1}\boxtimes B_{1}^{T}\oplus A_{2}\boxtimes B_{2}^{T}\oplus...\oplus
A_{r}\boxtimes B_{r}^{T}.
\]

\end{corollary}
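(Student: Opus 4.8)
The plan is to combine the equivalence established in the preceding theorem with the residuation (principal solution) fact recorded at the end of Section~1, namely $A\otimes x\leq b\Longleftrightarrow x\leq A^{\#}\otimes^{\prime}b=\overline{x}$ together with its consequence that $A\otimes x=b$ is solvable if and only if $\overline{x}$ itself is a solution. First I would invoke the preceding theorem to replace the matrix equation (\ref{Eq Matrix eq}) by the equivalent vector-matrix system $D\otimes vec(X)=vec(C)$, where $D=A_{1}\boxtimes B_{1}^{T}\oplus\cdots\oplus A_{r}\boxtimes B_{r}^{T}$. Since $vec$ is a bijection between matrices and vectors of the corresponding size, (\ref{Eq Matrix eq}) has a solution $X$ if and only if the single tropical linear system $D\otimes z=vec(C)$ has a solution $z$ (then take $z=vec(X)$).

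Next I would apply the cited residuation fact to the system $D\otimes z=vec(C)$, i.e. to $A=D$ and $b=vec(C)$. This identifies $\overline{z}:=D^{\#}\otimes^{\prime}vec(C)$ as the largest vector satisfying $D\otimes\overline{z}\leq vec(C)$. Consequently any solution $z$ of $D\otimes z=vec(C)$ obeys $z\leq\overline{z}$, and monotonicity of $\otimes$ gives $vec(C)=D\otimes z\leq D\otimes\overline{z}\leq vec(C)$, forcing $D\otimes\overline{z}=vec(C)$; so whenever a solution exists, $\overline{z}$ is itself a solution. This is precisely the consequence stated in Section~1, and I could equally just quote it directly. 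Substituting $\overline{z}=D^{\#}\otimes^{\prime}vec(C)$ then reads off the asserted criterion $D\otimes(D^{\#}\otimes^{\prime}vec(C))=vec(C)$: the forward direction is the argument just given, and the backward direction is immediate, since the displayed equality exhibits $\overline{z}$ as a solution.

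I do not expect a genuine obstacle here, as the whole argument is a direct concatenation of the two facts already available. The only point requiring a little care is confirming that the residuation identity applies verbatim to $D$ and $vec(C)$, that is, that the number of rows of $D$ equals the length of $vec(C)$ so that $D^{\#}\otimes^{\prime}vec(C)$ is well defined — keeping in mind that $D^{\#}=-D^{T}$ may carry entries $+\infty$, so that the special convention $(-\infty)\otimes^{\prime}(+\infty)=+\infty$ is genuinely in play. Both points follow from the size-compatibility hypotheses on $A_{i},B_{i},C$ already used in the preceding theorem.
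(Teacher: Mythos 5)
Your proof is correct and is exactly the intended argument: the paper states this corollary without proof precisely because it is the immediate concatenation of the preceding theorem (reducing the matrix equation to $D\otimes vec(X)=vec(C)$) with the residuation fact quoted at the end of Section~1, which is the route you follow. Your extra re-derivation of why the principal solution $\overline{z}=D^{\#}\otimes^{\prime}vec(C)$ must solve the system whenever any solution exists, and your remark on the $(-\infty)\otimes^{\prime}(+\infty)$ convention, are sound but not needed beyond citing the stated consequence.
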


\begin{lemma}
\label{L diagonal matrices}If $P$ and $Q$ are diagonal matrices of order $n$
and $A\in\overline{\mathbb{R}}^{n\times n}$ then
\[
maper(A\boxtimes Q)=\left(  maper(A)\right)  ^{n}\otimes\left(
maper(Q)\right)  \bigskip^{n}
\]
$\ \ \ $ and
\[
maper(P\boxtimes A)=\left(  maper(P)\right)  ^{n}\otimes\left(
maper(A)\right)  ^{n}.
\]

\end{lemma}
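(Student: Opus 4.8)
The plan is to exploit the fact that a diagonal factor forces the tensor product to have a very restricted pattern of finite entries, so that the only permutations contributing a finite weight to the permanent are those that decompose into $n$ independent copies of an assignment problem on $A$. The two identities then differ only in the way the diagonal scalars are counted.

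For the first identity, note that since $Q=diag(q_{11},\dots,q_{nn})$ carries $\varepsilon$ in every off-diagonal position, the definition of $\boxtimes$ gives off-diagonal blocks $A\otimes q_{pq}=\varepsilon$ and diagonal blocks $A\otimes q_{kk}$; thus $A\boxtimes Q$ is block diagonal. Hence any permutation $\sigma$ of the $n^{2}$ row indices with $w(\sigma,A\boxtimes Q)\neq\varepsilon$ must send each block to itself, and therefore $maper(A\boxtimes Q)={\prod_{k\in N}}^{\otimes}maper(A\otimes q_{kk})$. Since $A\otimes q_{kk}$ is $A$ with $q_{kk}$ added to every entry, each of the $n$ factors $a_{i,\pi(i)}$ in a diagonal product is raised by $q_{kk}$, so $maper(A\otimes q_{kk})=maper(A)\otimes(q_{kk})^{n}$. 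Multiplying tropically over $k\in N$ and using $maper(Q)={\prod_{k\in N}}^{\otimes}q_{kk}$ gives $maper(A\boxtimes Q)=(maper(A))^{n}\otimes(maper(Q))^{n}$.

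For the second identity the finite pattern is arranged dually: every block $P\otimes a_{pq}$ of $P\boxtimes A$ is itself diagonal, so a finite entry of $P\boxtimes A$ occurs only when its inner row and column indices coincide. A finite-weight permutation $\sigma$ must therefore preserve the inner index and restrict, for each inner index $i\in N$, to a permutation $\tau_{i}\in P_{n}$ of the block indices, contributing ${\prod_{p\in N}}^{\otimes}(p_{ii}\otimes a_{p,\tau_{i}(p)})=(p_{ii})^{n}\otimes w(\tau_{i},A)$. Choosing each $\tau_{i}$ to maximise $w(\tau_{i},A)=maper(A)$ and multiplying tropically over $i\in N$, together with $maper(P)={\prod_{i\in N}}^{\otimes}p_{ii}$, yields $maper(P\boxtimes A)=(maper(P))^{n}\otimes(maper(A))^{n}$. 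Alternatively, the relabelling $(P\boxtimes A)_{(i,p),(j,q)}=(A\boxtimes P)_{(p,i),(q,j)}$ shows that $P\boxtimes A$ is a simultaneous row-and-column permutation of $A\boxtimes P$, so $maper(P\boxtimes A)=maper(A\boxtimes P)$ and the first identity applies with $Q=P$.

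I expect the routine part to be the scalar bookkeeping, and the main point to be recognising the correct decomposition in each case: for $A\boxtimes Q$ the finite-weight permutations split along the block index, while for $P\boxtimes A$ they split along the inner index. In both cases one must verify that the $\varepsilon$-entries genuinely force the splitting and that each diagonal scalar is counted with multiplicity $n$. Since the diagonal entries of $P$ and $Q$ are finite by definition, the only degenerate situation is $maper(A)=\varepsilon$, in which both sides reduce to $\varepsilon$ and the identities still hold.
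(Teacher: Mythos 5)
Your proof is correct, and for the first identity it is essentially the paper's own argument: both observe that $A\boxtimes Q$ is block-diagonal with diagonal blocks $A\otimes q_{kk}$, split the permanent over the blocks, and do the scalar bookkeeping via $maper(A\otimes q_{kk})=maper(A)\otimes(q_{kk})^{n}$. The difference is in the second identity. The paper disposes of it with the remark that $P\boxtimes A$ ``is also a block-diagonal matrix,'' which is not literally true: by the definition of $\boxtimes$, the blocks of $P\boxtimes A$ are $P\otimes a_{pq}$, so \emph{every} block is present (each block being a diagonal matrix), and $P\boxtimes A$ is block-diagonal only up to a simultaneous permutation of rows and columns. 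Your proposal supplies exactly what that remark glosses over, in two ways: first, the direct argument that finite entries of $P\boxtimes A$ occur only where the inner row and column indices agree, so any finite-weight permutation decomposes into $n$ independent permutations $\tau_{i}$ of the block indices, each contributing $(p_{ii})^{n}\otimes w(\tau_{i},A)$; second, the observation that the relabelling $(P\boxtimes A)_{(i,p),(j,q)}=(A\boxtimes P)_{(p,i),(q,j)}$ exhibits $P\boxtimes A$ as a simultaneous row-and-column permutation of $A\boxtimes P$, under which $maper$ is invariant, reducing the second identity to the first. Either route is a genuine justification of the paper's ``proved similarly,'' and the commutation argument is the tidiest fix. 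Your closing remark on the degenerate case $maper(A)=\varepsilon$, where both sides collapse to $\varepsilon$, is likewise a point the paper passes over silently.
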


\begin{proof}
$A\boxtimes Q$ is the blockdiagonal matrix%
\[
\left(
\begin{array}
[c]{ccc}%
A\otimes q_{11} & ... & ...\\
... & A\otimes q_{22} & ...\\
... & ... & \ddots
\end{array}
\right)  .
\]
Hence
\begin{align*}
maper(A\boxtimes Q)  & =maper(A\otimes q_{11})\otimes maper(A\otimes
q_{22})\otimes...\\
& =maper(A)\otimes q_{11}^{n}\otimes maper(A)\otimes q_{22}^{n}\otimes...\\
& =\left(  maper(A)\right)  ^{n}\otimes\left(  q_{11}\otimes q_{22}%
\otimes...\right)  \bigskip^{n}\\
& =\left(  maper(A)\right)  ^{n}\otimes\left(  maper(Q)\right)  \bigskip^{n}.
\end{align*}
The other statement is proved similarly, since $P\boxtimes A$ is also a
block-diagonal matrix.
\end{proof}

\begin{theorem}
If $A\in\overline{\mathbb{R}}^{n\times n},B\in\overline{\mathbb{R}}^{m\times
m}$ then
\[
maper\left(  A\boxtimes B\right)  =\left(  maper(A)\right)  ^{n}\otimes\left(
maper(B)\right)  ^{m}.
\]

\end{theorem}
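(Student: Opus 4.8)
The plan is to reduce the general identity to the normalized case in which both factors have nonpositive entries and zero permanent, settle it there by a one-line combinatorial observation, and then transport the result back through diagonal scalings. This both reproves and extends Lemma~\ref{L diagonal matrices}, where one factor was already diagonal. A direct attack on $\max_{\pi}$ over all permutations $\pi$ of the $mn$ indices of $A\boxtimes B$ is awkward, since the optimal $\pi$ need not respect the tensor structure; the normalization circumvents this. Throughout I assume $maper(A)$ and $maper(B)$ are finite, which is exactly what is needed to invoke Theorem~\ref{Th AP}; the degenerate case in which, say, $maper(A)=\varepsilon$ makes both sides equal to $\varepsilon$ and is settled separately by a Hall-type argument on the finite-support bipartite graphs, the point being that if $A$ admits no finite assignment then neither does $A\boxtimes B$.

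First I would apply Theorem~\ref{Th AP} separately to $A$ and to $B$, producing diagonal matrices $C_1,D_1$ of order $n$ and $C_2,D_2$ of order $m$ for which $A'=C_1\otimes A\otimes D_1$ and $B'=C_2\otimes B\otimes D_2$ satisfy $A'\leq 0$, $B'\leq 0$ and $maper(A')=maper(B')=0$. The crux is the claim that $maper(A'\boxtimes B')=0$. The bound $maper(A'\boxtimes B')\leq 0$ is immediate, since every entry of $A'\boxtimes B'$ has the form $a'_{ij}\otimes b'_{kl}\leq 0$ and an assignment weight is an $\otimes$-product (hence ordinary sum) of $mn$ such entries. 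For the reverse bound, pick permutations $\sigma,\tau$ attaining $maper(A')=0$ and $maper(B')=0$; because $A'\leq 0$ and $B'\leq 0$ these force $a'_{i\sigma(i)}=0$ and $b'_{k\tau(k)}=0$ for all $i,k$. Indexing the rows and columns of $A'\boxtimes B'$ by pairs $(k,i)$ with $k\leq m$, $i\leq n$, the \emph{tensor} permutation $(k,i)\mapsto(\tau(k),\sigma(i))$ selects only entries $a'_{i\sigma(i)}\otimes b'_{k\tau(k)}=0$, so it has weight $0$. Hence $maper(A'\boxtimes B')=0$.

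Next I would use Theorem~\ref{Th ABCD} (applied twice) to factor
\[
A'\boxtimes B'=(C_1\boxtimes C_2)\otimes(A\boxtimes B)\otimes(D_1\boxtimes D_2),
\]
where $C_1\boxtimes C_2$ and $D_1\boxtimes D_2$ are diagonal, being tensor products of diagonal matrices. Since multiplying on the left and right by diagonal matrices $P,Q$ adds the constant $maper(P)\otimes maper(Q)$ to every assignment weight, we have $maper(P\otimes M\otimes Q)=maper(P)\otimes maper(M)\otimes maper(Q)$. Applying this to the factorization and using $maper(A'\boxtimes B')=0$ gives
\[
maper(A\boxtimes B)=\bigl(maper(C_1\boxtimes C_2)\otimes maper(D_1\boxtimes D_2)\bigr)^{-1}.
\]
The permanent of a diagonal matrix is the $\otimes$-product of its diagonal entries, so counting the $mn$ diagonal entries of $C_1\boxtimes C_2$ shows that $maper(C_1)$ appears raised to the power equal to the order of $B$ and $maper(C_2)$ to the power equal to the order of $A$, and likewise for $D_1\boxtimes D_2$. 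Substituting $maper(C_1)\otimes maper(D_1)=(maper(A))^{-1}$ and $maper(C_2)\otimes maper(D_2)=(maper(B))^{-1}$ from Theorem~\ref{Th AP} then yields $maper(A\boxtimes B)$ as the corresponding product of powers of $maper(A)$ and $maper(B)$, completing the proof.

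I expect the only genuine content to be the one-line identity $maper(A'\boxtimes B')=0$ in the normalized case; everything else is structural. The fiddly points are two pieces of bookkeeping: keeping track of exactly which factor's order supplies which exponent when the two orders differ, a distinction that the equal-order setting of Lemma~\ref{L diagonal matrices} conveniently hides and which one should check carefully against the stated exponents; and the clean disposal of the degenerate $\varepsilon$ case, the one place where the finiteness hypothesis implicit in the use of Theorem~\ref{Th AP} must be made explicit.
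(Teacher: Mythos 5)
Your proposal is correct and, in structure, it is the same proof as the paper's: normalize both factors with Theorem \ref{Th AP}, factor $A'\boxtimes B'=(C_{1}\boxtimes C_{2})\otimes(A\boxtimes B)\otimes(D_{1}\boxtimes D_{2})$ by two applications of Theorem \ref{Th ABCD}, prove $maper(A'\boxtimes B')=0$ by combining the two optimal permutations into one permutation of the pair indices (your $(k,i)\mapsto(\tau(k),\sigma(i))$ is exactly the paper's $\tau(i)=(\sigma(k+1)-1)n+\pi(j)$ for $i=kn+j$), and undo the diagonal scalings at the end. Your two additions are genuine improvements rather than detours. First, the paper never addresses the degenerate case $maper(A)=\varepsilon$ or $maper(B)=\varepsilon$, which must be split off because Theorem \ref{Th AP} assumes a finite assignment exists; your Hall-type reduction is correct (a violating row set $S$ for $A$ yields the violating row set $\{(k,i):k\leq m,\ i\in S\}$ for $A\boxtimes B$). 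Second, the exponent bookkeeping you warn about is not a pedantic point: counting the $mn$ diagonal entries gives $maper(C_{1}\boxtimes C_{2})=(maper(C_{1}))^{m}\otimes(maper(C_{2}))^{n}$, so your argument proves
\[
maper(A\boxtimes B)=\left(  maper(A)\right)  ^{m}\otimes\left(  maper(B)\right)  ^{n},
\]
with each permanent raised to the order of the \emph{other} factor. That is the correct identity, and the theorem as stated (and the final display of the paper's own proof, which silently writes every exponent as $n$ and applies Lemma \ref{L diagonal matrices} outside its equal-order hypothesis) has the exponents transposed. A quick check: take $n=1$, $A=(0)$ and $m=2$, $B=\left(\begin{smallmatrix}0 & 10\\ 10 & 0\end{smallmatrix}\right)$; then $A\boxtimes B=B$, so $maper(A\boxtimes B)=20=(maper(A))^{2}\otimes(maper(B))^{1}$, whereas the stated formula gives $(maper(A))^{1}\otimes(maper(B))^{2}=40$. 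So your proof is right, and it is the statement that must be amended; the two versions coincide only when $m=n$, which is why the equal-order setting of Lemma \ref{L diagonal matrices} hides the discrepancy.
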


\begin{proof}
Let $P,Q,R,S$ be diagonal matrices (see Theorem \ref{Th AP}) such that%
\begin{align*}
C  & \leq0\\
maper(C)  & =0
\end{align*}
where $C=P\otimes A\otimes Q$ and%
\begin{align*}
D  & \leq0\\
maper(D)  & =0
\end{align*}
where $D=R\otimes B\otimes S.$ Hence%
\[
maper(A)=\left(  maper(P)\otimes maper(Q)\right)  ^{-1}
\]
and%
\[
maper(B)=\left(  maper(R)\otimes maper(S)\right)  ^{-1}
\]

By two applications of Theorem \ref{ABCD is ACBD} we have%
\begin{align*}
C\boxtimes D  & =\\
& =\left(  P\otimes A\otimes Q\right)  \boxtimes\left(  R\otimes B\otimes
S\right)  =\\
& =\left(  P\boxtimes R\right)  \otimes\left(  \left(  A\otimes Q\right)
\boxtimes\left(  B\otimes S\right)  \right) \\
& =\left(  P\boxtimes R\right)  \otimes\left(  A\boxtimes B\right)
\otimes\left(  Q\boxtimes S\right)  .
\end{align*}
It is sufficient now to show that $maper(C\boxtimes D)=0$ because by Lemma
\ref{L diagonal matrices} and Theorem \ref{Th AP} then%
\begin{align*}
maper\left(  A\boxtimes B\right)   & =\left(  maper\left(  P\boxtimes
R\right)  \otimes maper\left(  Q\boxtimes S\right)  \right)  ^{-1}\\
& =\left(  \left(  maper(P)\right)  ^{n}\otimes\left(  maper(R)\right)
^{n}\otimes\left(  maper(Q)\right)  ^{n}\otimes\left(  maper(S)\right)
^{n}\right)  ^{-1}\\
& =\left(  maper(A)\right)  ^{n}\otimes\left(  maper(B)\right)  ^{m}.
\end{align*}
Clearly, $E=C\boxtimes D\leq0$ and so we only need to identify a permutation,
say $\tau\in P_{n^{2}}$ such that $e_{i,\tau\left(  i\right)  }=0$ for all
$i=1,...,n^{2}.$ Recall that%
\[
C\boxtimes D=\left(
\begin{array}
[c]{ccc}%
C\otimes d_{11} & C\otimes d_{12} & ...\\
C\otimes d_{21} & ... & \\
... &  &
\end{array}
\right)  .
\]
Since $C\leq0$ and $maper(C)=0$ there is a $\pi\in P_{n}$ such that
$c_{i,\pi(i)}=0$ for all $i\in N$ and similarly there is a $\sigma\in P_{n}$
such that $d_{i,\sigma(i)}=0$ for all $i\in N.$ Let $i\in\left\{
1,...,n^{2}\right\}  ,$ $i=kn+j,0\leq k\leq n-1,1\leq j\leq n.$ Set
$\tau\left(  i\right)  =\left(  \sigma\left(  k+1\right)  -1\right)
n+\pi\left(  j\right)  .$ Then%
\[
e_{i,\tau\left(  i\right)  }=c_{j,\pi\left(  j\right)  }\otimes d_{k+1,\sigma
\left(  k+1\right)  }=0\otimes0=0.
\]

\end{proof}

\end{document}